\numberwithin{equation}{section}
\renewcommand{\epsilon}{\varepsilon}
\newtheorem{qst}{Question}
\newtheorem{prop}{Proposition}
\DeclareMathOperator{\re}{\mathrm{Re}}
\DeclareMathOperator{\tr}{\mathrm{tr}}
\DeclareMathOperator{\sgn}{\mathrm{sgn}}
\DeclareMathOperator{\Div}{\mathrm{div}}
\newcommand{\e}{\mathrm{e}}
\newcommand{\ii}{\mathrm{i}}
\newcommand{\cp}{compare }
\newcommand{\eg}{for example }
\newcommand{\ie}{\mbox{i.\,e.}\xspace}
\newcommand{\resp}{\mbox{respectively}\xspace}
\newcommand{\Wlog}{\mbox{Without loss of generality}\xspace}
\newcommand{\fa}{\mathbf}
\newcommand{\one}{\fa{\mathbbm{1}}}
\newcommand{\R}{\mathbb{R}}
\newcommand{\N}{\mathbb{N}}
\newcommand{\D}{\mathcal{D}}
\newcommand {\cont}{\mathcal{C}}
\newcommand \pn[1]{\emph{#1}}
\newtheorem{definition}{Definition}
\newtheorem{theorem}{Theorem}
\newtheorem{alg}{Algorithm}
\newtheorem{remark}{Remark}
\providecommand{\norm}[1]{\lVert#1\rVert}
\providecommand{\ker}[1]{\mbox{Ker}}
\providecommand{\sgn}[1]{\mbox{sgn}}
\providecommand{\Id}[1]{\mbox{Id}}
\providecommand{\ran}[1]{\mbox{ran}}
\providecommand{\deg}[1]{\mbox{deg}}
\providecommand{\diag}[1]{\mbox{diag}}
\providecommand{\det}[1]{\mbox{det}}
\providecommand{\R}{\mathds{R}}
\providecommand{\C}{\mathds{C}}
\providecommand{\Z}[1]{\mathds{Z}}
\providecommand{\N}[1]{\mathds{N}}
\providecommand{\K}[1]{\mathds{K}}
\title{On the solvability of some partial differential inequality}
\author[M. Himmel]{Martin Himmel}
\address{M.~Himmel, FB 08 - Institut f\"{u}r Mathematik,
Johannes Gutenberg-Universit\"{a}t Mainz,\newline
Staudinger Weg 9,
D-55099 Mainz,
Germany}
\email{himmel@mathematik.uni-mainz.de}
\begin{document}
\begin{abstract}
The \pn{Dulac} criterion is a classical method to rule out the existence of periodic solutions in planar differential equations.
In this paper the applicability and therefore reversibility of this criterion is under consideration.
\end{abstract}
\maketitle

\section{Introduction and Motivation}
Let  $X: \D \to \R^2$ be a smooth vector-valued function with components $P$ and $Q$, $X=(P,Q)$,
defined on some planar domain $\D \subseteq \R^2$.
Now, consider the system of two ordinary differential equations
\begin{equation}
\frac{dx}{dt}=P(x,y),\qquad
\frac{dy}{dt}=Q(x,y).
\label{eq:planarODE}
\end{equation}
Such systems appear frequently in applications, \eg in electrical engineering, physics, biology and many others,
but they have also become a field of mathematical interest on their own. 
For simplicity, one can think of $P$ and $Q$ either just as polynomials of two real variables $x$ and $y$,
or as smooth or even analytical functions given by some power series that converges in $\D$,
\ie, 
$X$ is of class $\cont^r$ with $r\in \{1,2, \ldots \infty, \omega\}$.
Geometrically speaking, the solutions of the system \eqref{eq:planarODE} are smooth curves in the plane that are tangential to the function $X$ at each point $z=(x,y)\in \D$.
If the components of $X$ are polynomials in $x$ and $y$ of degree one
then
\begin{equation*}
X(x,y)=
\begin{pmatrix}
ax+by\\
cx+dy
\end{pmatrix}
\end{equation*}
with $a,b,c,d\in \R$ being real constants,
and the analysis of system \eqref{eq:planarODE} is easy and
the solution curves of system \eqref{eq:planarODE} are given explicitly in terms of the matrix exponential function $\e^A:=\sum_{k=0}^\infty{\frac{1}{k!}A^k}$.
The situation changes drastically
if $X$ is at least quadratic,
\ie,
if $P$ and/or $Q$ are polynomials of degree two or higher,
$P,Q \in \R_n[x,y]$, $n \geq 2$.
Many things are known about quadratic differential equations, see \cites{MR1313826,MR936240,MR757291},
but they are still a broad area of research. 
Quite a few people dedicated their whole life to the investigation 
of quadratic planar differential equations. 
The interest to this field is related to the $16$-th Hilbert problem, which is still unsolved even in the quadratic case.

\subsection{Sixteenth Hilbert problem}
Let $R_d[x,y]$ denote the space given by polynomials in $x$ and $y$ of degree at most $d$,
and consider a planar system \eqref{eq:planarODE} with polynomial right-hand-side $X \in \R^2_d[x,y]$.
The $16$-th \pn{Hilbert} problem consists of two parts,
the first is a purely algebraic problem and deals with questions related to the topology of algebraic curves and surfaces. 
In this paper we are mainly concerned about the second part of the $16$-th \pn{Hilbert} problem
which deals with the curves
that arise as solutions of the planar differential equation \eqref{eq:planarODE} with polynomial vector field $X$.
More precisely, in the second part of his problem \pn{Hilbert} asks for the
maximal number and relative position of the isolated closed orbits, called limit cycles
\footnote{During history the term limit cycle was used for a stable isolated closed orbit or just for closed orbit. Nowadays and in this paper, a limit cycle is a closed orbit that is isolated in the set of all periodic orbits of some differential equation.},
the differential system \eqref{eq:planarODE} can have at most.
It is even difficult to understand why a fixed planar differential equations with polynomial vector field can only have a finite number of limit cycles.
A proof of this fact is due to \pn{Ilyashenko} in $1991$ \cite{MR1133882}, 
who actually corrected a very complicated and long proof due to \pn{Dulac} \cite{MR1504823}, a student of \pn{Poincaré}.
\pn{Écalle} et al. obtained independently a proof for this theorem \cite{MR889742}. 
Note that this does not imply the existence of an upper bound for the \textit{maximal number of limit cycles} $H(d)$
which a planar polynomial system of degree $d$ can have.
One calls $H(d)$ the $d$-th \pn{Hilbert} number.
Linear vector fields have no limit cycles;
hence $H(1) = 0$.
Quadratic systems can actually have four limit cycles
and some people believe that this is the maximal number of limit cycles
a quadratic system can have,  
but it is still unknown whether or not $H(2)$ is a finite number.
Usually, the first part of the $16$-th Hilbert problem
is studied by researchers in real algebraic geometry, while the second
part is considered by mathematicians working in dynamical systems or
differential equations. \pn{Hilbert} also pointed out that there exist possibly
connections between these two parts.
See \cite{MR1898209} and \cite{MR2072639} for the original paper in Russian by \pn{Ilyashenko}
and more recent \cite{OnThe16HilbertProblemLlibre2012} for a survey about the second part of the $16$-th \pn{Hilbert} problem.

\subsection{\pn{Dulac} criterion}\label{sec:DulacCriterion}
In $2008$ the author was confronted with the analysis of some quadratic polynomial differential equation.
In literature there existed a quite technical proof showing the uniqueness of a limit cycle for this system.
Then, fortunately, he was able to obtain the same result by applying the well-known

\begin{theorem}[\pn{Dulac} criterion \cite{MR1504823}]\label{theo:DulacCriterion}
Let $\Omega \subseteq \R^2$ be a simply connected region, $X:=(P,Q) \in \cont^1(\Omega, \R^2)$ a smooth vector field
and $B\in \cont^1(\Omega, \R)$ a smooth real-valued function - 
such that
the partial differential inequality
\begin{equation}
\Div(B X):=\frac{\partial(BP)}{\partial{x}}+ \frac{\partial(BQ)}{\partial{y}}>0 
\label{eq:DulacCondition}
\end{equation}
is satisfied in  $\Omega$.
Then the planar ordinary differential equation \eqref{eq:planarODE} with $X$ as right-hand-side 
does not posses any periodic solution that is fully contained in $\Omega$.
\end{theorem}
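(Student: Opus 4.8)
The plan is to argue by contradiction using Green's theorem (the divergence theorem in the plane). Suppose, contrary to the conclusion, that the system \eqref{eq:planarODE} possesses a periodic solution fully contained in $\Omega$; let $\gamma$ denote its orbit, a simple closed curve, and let $U$ be the bounded open region it encloses. Since $\Omega$ is simply connected, $U \subseteq \Omega$, so the hypothesis \eqref{eq:DulacCondition} holds throughout $\overline{U}$. The key geometric fact is that the orbit is everywhere tangent to the vector field $X$: at each point of $\gamma$ the velocity $(\dot x, \dot y) = (P,Q)$ is tangent to the curve, which means the vector field $X$ has no component normal to $\gamma$ along the orbit.

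First I would apply the divergence theorem to the vector field $B X$ on the region $U$, writing
\begin{equation*}
\iint_U \Div(B X)\, \dd x\, \dd y = \oint_{\partial U} (B X)\cdot \fa{n}\, \dd s,
\end{equation*}
where $\fa{n}$ is the outward unit normal along $\partial U = \gamma$ and $\dd s$ is arc length. Next I would evaluate the boundary integral: because $X = (P,Q)$ is tangent to $\gamma$ at every point, the normal component $X \cdot \fa{n}$ vanishes identically along the orbit, hence $(BX)\cdot \fa{n} = B\,(X\cdot \fa{n}) = 0$ on $\gamma$. Therefore the right-hand side is zero, forcing $\iint_U \Div(BX)\, \dd x\, \dd y = 0$.

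This yields the contradiction. By hypothesis \eqref{eq:DulacCondition}, the integrand $\Div(BX)$ is strictly positive at every point of $\Omega$, and in particular on the nonempty open set $U$; since $\Div(BX)$ is continuous (as $B, P, Q \in \cont^1$), its integral over $U$ must be strictly positive, contradicting the value $0$ just obtained. Hence no such periodic orbit can exist, which is the assertion of the theorem.

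The main obstacle, and the point requiring the most care, is the justification that the enclosed region $U$ lies inside $\Omega$ so that hypothesis \eqref{eq:DulacCondition} is available on all of $\overline{U}$. This is exactly where simple connectedness of $\Omega$ is used: a simple closed curve contained in a simply connected planar domain bounds a region that is itself contained in the domain (by the Jordan curve theorem together with the topological characterization of simple connectedness). A secondary technical point is ensuring the orientation is chosen so that $\fa{n}$ is genuinely the outward normal, and confirming that the regularity $X \in \cont^1$ suffices both for the existence of the $\cont^1$ periodic orbit and for the applicability of the divergence theorem; neither of these causes real difficulty, but they should be stated explicitly.
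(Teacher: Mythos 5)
Your argument is correct and is precisely the proof the paper indicates: an indirect argument assuming a closed orbit exists and applying the divergence theorem of Gau{\ss} to $BX$ over the enclosed region, where the boundary term vanishes by tangency of $X$ to the orbit while the interior integral is strictly positive. The paper only sketches this in a remark; your write-up supplies the same reasoning in full detail, including the correct use of simple connectedness to place the enclosed region inside $\Omega$.
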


\begin{remark}
\begin{enumerate}
	\item 
The proof of theorem \ref{theo:DulacCriterion} is indirect: 
One assumes the existence of a closed orbit of \eqref{eq:planarODE} in $\Omega$ and applies the divergence theorem of \pn{Gau{\ss}}.
This gives immediately a contradiction.
\item
In $2009$ the author was able to weaken the assumptions made on the function $B$, see \cite{Himmel:2009:OTE}. 
Essentially, the statement of theorem \ref{theo:DulacCriterion} still holds
if the function $B$ is only weakly differentiable of first degree 
and equation \eqref{eq:DulacCondition} holds only almost everywhere in $\Omega$, \cp definition \ref{def:DulacFunction}.
\item
If the domain $\Omega \subseteq \R^2$ referred to in theorem \ref{theo:DulacCriterion} is not simply connected, 
but $p$-connected for some $p \in \N$, $p \geq 2$,
then there can be at most $p-1$ closed orbits fully contained in $\Omega$.
\item
The \pn{Dulac} criterion is a generalization of the \pn{Bendixson} criterion \cite{MR0350126}.
In fact, the \pn{Bendixson} criterion follows from theorem \ref{theo:DulacCriterion}
if we choose $B=\one$, the function that is constant $1$ for all $z\in \Omega$.
\end{enumerate}
\end{remark}

\begin{definition}[\pn{Dulac} function]\label{def:DulacFunction}
The multiplier $B\in W^{1,p}(\Omega,\R)$ is called \pn{Dulac} function of the planar dynamical system \eqref{eq:planarODE} in $\Omega \subseteq \D$
if and only if
there is a real-valued continuous function $g>0$ having positive sign in $\Omega$ except on a set of \pn{Lebesgue} measure zero 
such that equation
\begin{equation}
\Div(B  X)=B\cdot \Div{X}+
\langle{\nabla{B},X}\rangle=g
\label{eq:defDulacFunction}
\end{equation}
holds in $\Omega$.
Here $W^{1,p}(\Omega,\R)$ denotes the \pn{Sobolev} space of $L^p$ functions that are weakly differentiable of first degree
and with weak derivative in $L^p$.
Vice versa, we call any function $B$ of class $W^{1,p}$ satisfying \eqref{eq:defDulacFunction} a $g$-\pn{Dulac} function of $X$ in $\Omega$.
\end{definition}

In general,
it is very difficult to find a \pn{Dulac} function for some given vector field $X$,
but if one incidentally guesses such a function,
it is quite easy to verify that it obeys condition \eqref{eq:DulacCondition}%
\footnote{One can compare this problem to the decomposition of a natural number into its prime factors: 
If some natural number $n$ together with some product 
$p:=\prod_{i=1}^{N}{p_i^{d_i}}$ of powers of prime numbers is given,
it is quite easy to decide whether $p$ is the prime factorization of $n$, $p=n$,
but, from our current state of knowledge, it is very difficult to decompose a big natural number into its prime factors.}%
. Just for curiosity, the author wondered whether the converse statement of theorem \ref{theo:DulacCriterion} is also true.

\begin{qst}\label{qst:Dulac}
Given a  smooth planar vector field $X$
and assume that the planar differential equation \eqref{eq:planarODE} does not have any periodic solution in some simply connected domain $\Omega$.\\
Does there exist a \pn{Dulac} function in $\Omega$?
\end{qst}

The answer is No if the boundary of $\Omega$ is formed by a periodic solution of the corresponding system,
but in every other case considered by the author
he was able to obtain an affirmative answer to question \ref{qst:Dulac} raised above.
In the following we quote some of these positive results.

\section{Local results}\label{sec:localResults}
Gradient systems never possess periodic solutions \cite{GuckenheimerHolmes},
hence we expect that gradient fields have a \pn{Dulac} function.
Indeed, this is the case.
\begin{theorem}[Gradient fields \cite{Himmel:2009:OTE}]
Let $X=\nabla{V}$ be a globally defined gradient field with potential $V\in\cont^2(\R^2,\R)$.
Then $B_1=\exp{(V)}$, $B_2=\exp{(-V)}$ and $B_3=V$ are local \pn{Dulac} functions for $X$ in $\Omega_i \subset \R^2$, $i=1,2,3$,
with $\cup_{i=1}^3{\Omega_i}=\R^2$.
\end{theorem}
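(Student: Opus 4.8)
The plan is to reduce the whole statement to a single divergence computation and then to assemble a covering of $\R^2$ out of the three sign regions. Since $X=\nabla V=(V_x,V_y)$, one has $\Div X=\Delta V$ and $\langle\nabla B,X\rangle=\langle\nabla B,\nabla V\rangle$ for any multiplier $B$. Substituting the three candidates into \eqref{eq:defDulacFunction} and using $\nabla(e^{\pm V})=\pm e^{\pm V}\nabla V$ yields
\begin{align*}
\Div(B_1X)&=e^{V}\bigl(\Delta V+|\nabla V|^2\bigr),\\
\Div(B_2X)&=e^{-V}\bigl(\Delta V-|\nabla V|^2\bigr),\\
\Div(B_3X)&=V\,\Delta V+|\nabla V|^2 .
\end{align*}
Everything then rests on finding, near each point of $\R^2$, one of these three expressions that keeps a definite sign. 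The strict demand $g>0$ in Definition \ref{def:DulacFunction} is here no loss of generality: the divergence-theorem argument behind Theorem \ref{theo:DulacCriterion} rules out periodic orbits as soon as $\Div(BX)$ has a \emph{constant} sign, and replacing $B$ by $-B$ converts a negative sign into a positive one, so it suffices to exhibit, around each point, a $B_i$ whose divergence does not vanish and keeps its sign on a neighborhood.

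The engine for the non-equilibrium points is the elementary identity
\[
\bigl(\Delta V+|\nabla V|^2\bigr)+\bigl(|\nabla V|^2-\Delta V\bigr)=2|\nabla V|^2\ge 0 .
\]
The two summands are precisely $e^{-V}\Div(B_1X)$ and $-e^{V}\Div(B_2X)$, so they cannot both be $\le 0$ unless $\nabla V=0$: at every point with $\nabla V\neq 0$ at least one of $\Delta V+|\nabla V|^2>0$ or $|\nabla V|^2-\Delta V>0$ holds. I would therefore set $\Omega_1:=\{\Delta V+|\nabla V|^2>0\}$ and $\Omega_2:=\{\Delta V-|\nabla V|^2<0\}$, both open because $V\in\cont^2$. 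On $\Omega_1$ the function $B_1$ has strictly positive divergence, and on $\Omega_2$ one has $\Div(B_2X)<0$, so $-B_2$ is a Dulac function there; the identity shows $\Omega_1\cup\Omega_2\supseteq\{\nabla V\neq 0\}$. Furthermore, at an equilibrium where $\Delta V\neq 0$ both factors collapse to $\Delta V$, so those equilibria already lie in $\Omega_1\cup\Omega_2$ as well.

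It then remains to cover the \emph{degenerate} equilibria, namely the points where $\nabla V=0$ and $\Delta V=0$ simultaneously, and this is exactly the slot reserved for $B_3=V$, whose divergence is $|\nabla V|^2+V\Delta V$. The plan is to take $\Omega_3$ to be a neighborhood of such a point on which this expression keeps a definite sign almost everywhere. I expect this last step to be the main obstacle. At a degenerate equilibrium all three divergences vanish at once, so definiteness of $\Div(B_3X)$ can only be extracted from the behavior in a punctured neighborhood, and one must exclude that $|\nabla V|^2+V\Delta V$ oscillates in sign arbitrarily close to the point. In the typical situation this is fine because the nonnegative term $|\nabla V|^2$ dominates; for instance, for harmonic $V$ one gets $\Div(B_3X)=|\nabla V|^2\ge 0$, which vanishes only on the isolated zeros of $\nabla V$, so $B_3$ works on a full neighborhood. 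In the fully degenerate case, however, the term $V\Delta V$ can be of the same order as $|\nabla V|^2$ and destroy definiteness, so the honest argument needs a hypothesis on the critical set of $V$ (e.g.\ that $V$ is a Morse function, or that its degenerate critical locus is Lebesgue-negligible). Securing this sign control near degenerate equilibria, and then choosing the $\Omega_i$ so that their union is all of $\R^2$, is the real content; the differentiation recorded in the first paragraph is routine.
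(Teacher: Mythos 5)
The paper's own ``proof'' is only a two--line instruction: set $\Omega_i:=\{z:\Div(B_iX)(z)>0\}$ and \emph{verify} that these three sets cover the plane. Your divergence computations are exactly what that verification would rest on, and they are correct, as is the identity $(\Delta V+|\nabla V|^2)+(|\nabla V|^2-\Delta V)=2|\nabla V|^2$ with which you handle the set $\{\nabla V\neq 0\}$. But notice that this identity forces you to replace $B_2$ by $-B_2$: where the second summand is positive one has $\Div(B_2X)<0$, i.e.\ the point lies \emph{outside} the paper's $\Omega_2$. This deviation is not cosmetic, it is necessary. For $V=c-x^2$ with $c>0$ one gets $\Div(B_1X)=e^{V}(4x^2-2)$, $\Div(B_2X)=-e^{-V}(2+4x^2)<0$ everywhere, and $\Div(B_3X)=6x^2-2c$, so the paper's three positivity sets miss the whole open strip $x^2<\min(1/2,\,c/3)$; only $-B_2$, which you allow but the theorem does not, is a \pn{Dulac} function there. (The example also shows the statement is sensitive to the additive constant in $V$, which does not change $X$ at all.) So on this first point your route is genuinely different from the paper's, and it is the paper's version of the covering claim that fails.

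The obstacle you flag at degenerate equilibria is likewise a real one and cannot be removed without an extra hypothesis of the kind you name. Take $V=1+x^3$: along $\{x=0\}$ one has $\nabla V=0$ and $\Delta V=0$, and near any such point $\Div(B_1X)=e^{V}(6x+9x^4)$, $\Div(B_2X)=e^{-V}(6x-9x^4)$ and $\Div(B_3X)=6x+15x^4$ all change sign on open sets in every neighbourhood, so none of $\pm B_1,\pm B_2,\pm B_3$ satisfies Definition \ref{def:DulacFunction} on any neighbourhood. Your proposal therefore correctly locates the two places where the paper's ``verify that $\cup_i\Omega_i=\R^2$'' cannot be carried out; the remaining gap is a defect of the theorem as stated rather than of your reasoning. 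What your write--up should still say explicitly is that you are proving a corrected statement --- the multipliers $\pm B_i$ on redefined regions $\Omega_i$, under an additional assumption controlling the sign of $|\nabla V|^2+V\Delta V$ near degenerate critical points --- and not the statement printed in the paper.
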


\begin{proof}
Define $B_1=\exp{(V)}$, $B_2=\exp{(-V)}$ and $B_3=V$ and
set $\Omega_i:=\{z\in\R^2|\, \Div(B_i X)(z)>0\}$.
Verify that $\cup_{i=1}^3{\Omega_i}=\R^2$ gives indeed the whole plane.
\end{proof}

A standard result from calculus says that vector fields can be straightened locally unless there is an equilibrium.
This fact can be used to obtain a local existence result in domains not containing zeros of the vector field.
Such domains are called canonical regions.
\begin{theorem}[Parallel flow \cite{MR2256001}]\label{theo:straighten}
Let $X:\D\subseteq \R^2 \to \R^2$ be any smooth vector field
and $\Omega$ a canonical region of $X$,
\ie a region where the flow of system \eqref{eq:planarODE} is equivalent to a parallel flow in the sense of Neumann \cite{MR0356138}.
Then, under some integrability assumptions, $X$ has a \pn{Dulac} function in $\Omega$.
\end{theorem}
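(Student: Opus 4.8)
The plan is to \emph{straighten} the flow and then transport the trivially-constructed multiplier back to $\Omega$. Since $\Omega$ is a canonical region it contains no equilibria of $X$, and by assumption the flow of \eqref{eq:planarODE} is equivalent to a parallel flow in the sense of Neumann; concretely there is a diffeomorphism $\Phi=(\phi_1,\phi_2)\colon\Omega\to R$ onto a parallel domain $R$ with coordinates $(u,v)$ that conjugates $X$ to the constant field $Y=\partial_u$, possibly up to a strictly positive scalar multiple (which only changes the construction below by a positive factor and is harmless for the sign condition). Locally such a $\Phi$ is the flow-box theorem, globally it is the assumed Neumann equivalence, and its regularity is inherited from $X$ via smooth dependence of ODE solutions on initial data. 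First I would record the transformation law of the divergence. Writing $J_\Phi$ for the nowhere-vanishing, hence sign-definite, Jacobian determinant of $\Phi$ and $\tilde J:=J_\Phi\circ\Phi^{-1}$, a short computation with the area form $dx\wedge dy$ and the identity $\mathcal L_{W}(dx\wedge dy)=(\Div W)\,dx\wedge dy$ gives, for every vector field $W$ on $\Omega$,
\[
\Div(J_\Phi\,W)=\bigl[(\Div_{uv}\Phi_{*}W)\circ\Phi\bigr]\,J_\Phi .
\]

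Applying this with $W=B\,X/J_\Phi$, so that $J_\Phi W=BX$ and $\Phi_{*}W=(\tilde B/\tilde J)\,\partial_u$ with $\tilde B:=B\circ\Phi^{-1}$, yields the decisive identity
\[
\Div(B\,X)=J_\Phi\cdot\Bigl[\partial_{u}\bigl(\tilde B/\tilde J\bigr)\Bigr]\circ\Phi .
\]
Thus, after possibly composing $\Phi$ with a reflection so that $J_\Phi>0$, the sign requirement \eqref{eq:DulacCondition} collapses to the single condition $\partial_u(\tilde B/\tilde J)>0$ on $R$, which is trivial to satisfy: pick any $F=F(u)$ with $F'>0$ and set $\tilde B=\tilde J\,F$, that is $B(z)=J_\Phi(z)\,F\bigl(\Phi(z)\bigr)$. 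Then $\Div(BX)=J_\Phi\,(F'\circ\Phi)=:g>0$, so $B$ is a $g$-\pn{Dulac} function in the sense of Definition \ref{def:DulacFunction}. Intrinsically the very same $B$ is the solution of the linear transport equation $\langle\nabla B,X\rangle+B\,\Div X=g$ along the orbits of \eqref{eq:planarODE}, with a positive right-hand side $g$ prescribed on a cross-section transverse to the flow; this alternative viewpoint makes clear that the construction uses nothing beyond the absence of equilibria and the parallel structure.

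The remaining, and genuinely delicate, point is \emph{regularity and integrability}, and this is exactly where the integrability assumptions of the statement enter. The construction already produces a smooth $B$ on $\Omega$ with $\Div(BX)>0$ pointwise; to obtain a bona fide \pn{Dulac} function one must in addition secure $B\in W^{1,p}(\Omega,\R)$ together with $g$ continuous and positive off a null set. When $R$ is an unbounded, infinite-strip parallel domain the naive choice $F=e^{u}$ ruins every $L^p$ bound, so I would instead take $F$ bounded with bounded positive derivative, for instance a sigmoidal $F$ with $F'>0$, and impose precisely the hypotheses guaranteeing that $J_\Phi$ and its first-order weak derivatives lie in $L^p$ up to the relevant part of $\partial\Omega$. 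The main obstacle is therefore not the algebra of the construction but the analytic control of the straightening map near the boundary of the canonical region: one must show that $\Phi$ and $J_\Phi$ are sufficiently tame for $B=J_\Phi\,(F\circ\Phi)$ and $g=J_\Phi\,(F'\circ\Phi)$ to inherit the required Sobolev and continuity properties. Under such integrability assumptions the construction delivers the desired \pn{Dulac} function, which proves the theorem.
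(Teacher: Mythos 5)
Your proposal is correct, and it rests on the same core idea as the paper's proof: straighten the flow on the canonical region and reduce the Dulac condition to a one-dimensional problem along orbits. The execution differs in a way worth noting. The paper, after straightening, writes the defining equation \eqref{eq:defDulacFunction} as the first-order linear ODE $\alpha B_r + B\,\Div X = g$ in the flow parameter $r$ and solves it for a \emph{prescribed} $g>0$ by the integrating-factor method. You instead never solve an ODE: you push the area form through the straightening diffeomorphism $\Phi$, use the transformation law $\Div(J_\Phi W)=\bigl[(\Div_{uv}\Phi_*W)\circ\Phi\bigr]J_\Phi$, and \emph{choose} $B=J_\Phi\,(F\circ\Phi)$ with $F'>0$, reading off $g=J_\Phi\,(F'\circ\Phi)$ as output. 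These are two parametrizations of the same solution family, since along trajectories $J_\Phi$ is (up to a factor constant on each orbit) $\exp\bigl(-\int\Div X\,dt\bigr)$, i.e.\ exactly the integrating factor $\e^{a(r)}$ appearing in the paper, while your $F$ is the antiderivative of the paper's $g$. What your route buys is (i) a genuinely global construction on the whole canonical region rather than a flow-box-local one, with the orientation issue $J_\Phi>0$ handled explicitly; (ii) an identification of where the theorem's vague ``integrability assumptions'' actually enter, namely the Sobolev control of $J_\Phi$ near $\partial\Omega$ and the need to take $F$ bounded on unbounded parallel strips --- points the paper's proof leaves entirely implicit (and where its displayed solution formula is in any case garbled in sign and in the placement of $\alpha$).
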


\begin{proof}
By assumption there are no equilibria in the domain, hence the vector field can be straightened locally.
Observe that, by this process of straightening, the planar system \eqref{eq:planarODE} decouples
and the partial differential equation \eqref{eq:defDulacFunction} from the definition of a \pn{Dulac} function 
reduces to the linear ordinary differential equation
\begin{equation*}
B \Div{X}+\alpha B_r =g
\end{equation*}
which implies
\begin{equation*}
B_r=\frac{g-B\Div{X}}{\alpha}
\end{equation*}
This is an ordinary differential equation with solution
\begin{equation*}
B=\e^{a(r)} \Big\{\int{g \cdot \e^{a(r)}}{\,dr}+\kappa \Big\}
\end{equation*}
where
$a(r)=\int{\Div{X}}{\, dr}$ and $\kappa \in\R$ is a constant of integration.
\end{proof}

For linear vector fields there is also a \pn{Dulac} function in terms of a quadratic function
if some spectral condition holds.
\begin{theorem}[Linear case \cite{Himmel:2009:OTE}]
\label{theo:linearCase}
Let $X(z)=Az$ be a linear vector field with $A$ having at most one eigenvalue zero.
The linear system \eqref{eq:planarODE} does not have periodic orbits
if and only if
the spectrum of the matrix $A$ consists only of eigenvalues with nonzero real part or zero,
\ie \, $\sigma{(A)}\cap \ii\R \subseteq \{0\}$. 
\end{theorem}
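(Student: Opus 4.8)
The plan is to exploit the explicit solvability of the linear system through the matrix exponential together with the similarity classification of real $2\times 2$ matrices. A linear change of coordinates $z=Pw$ is a diffeomorphism carrying orbits of $\dot z = Az$ onto orbits of $\dot w = (P^{-1}AP)w$ and hence preserves the existence of non-constant periodic orbits; I may therefore assume that $A$ is already in real Jordan normal form. The orbit through $z_0$ is $z(t)=\e^{tA}z_0$, and such an orbit is a (non-constant) periodic solution exactly when there is a $T>0$ with $\e^{TA}z_0=z_0$ while $z(\cdot)$ is not identically $z_0$. Since $A$ is real, its eigenvalues are either both real or a complex-conjugate pair $\alpha\pm\ii\beta$ with $\beta\neq 0$, and I treat these two situations in turn.

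In the real case the normal form renders $\e^{tA}$ a combination of the scalar flows $\e^{\lambda_i t}$, with an additional factor $t\e^{\lambda t}$ in the defective repeated-eigenvalue situation. The hypothesis that $A$ has at most one vanishing eigenvalue excludes the nilpotent double-zero block, so at least one eigenvalue $\lambda_i$ is nonzero, and the corresponding coordinate mode grows or decays exponentially; consequently no non-constant orbit can close for any $T>0$. This is in agreement with $\sigma(A)\subseteq\R$ and therefore $\sigma(A)\cap\ii\R\subseteq\{0\}$. In the complex case $A$ is similar to $\bigl(\begin{smallmatrix}\alpha&-\beta\\\beta&\alpha\end{smallmatrix}\bigr)$, so that $\e^{tA}=\e^{\alpha t}\bigl(\begin{smallmatrix}\cos\beta t&-\sin\beta t\\\sin\beta t&\cos\beta t\end{smallmatrix}\bigr)$ and $\abs{z(t)}=\e^{\alpha t}\abs{z_0}$. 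If $\alpha\neq 0$ the modulus is strictly monotone, so no orbit closes (a focus), and $\sigma(A)\cap\ii\R=\emptyset$; if $\alpha=0$ the flow is a pure rotation of period $2\pi/\abs{\beta}$, every orbit off the origin is a closed curve (a center), and $\sigma(A)=\{\pm\ii\beta\}\not\subseteq\{0\}$.

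Collecting the cases, a non-constant periodic orbit exists if and only if $A$ is of center type, which occurs precisely when $A$ possesses a nonzero purely imaginary eigenvalue, that is, when $\sigma(A)\cap\ii\R\not\subseteq\{0\}$; the contrapositive is the asserted equivalence. I expect the only genuine care to be needed in the degenerate real cases, where one must confirm that a single zero eigenvalue yields a line of equilibria but no closed orbit, and where the notion of periodic solution must be kept restricted to non-constant closed curves so that equilibria are not miscounted. As an alternative packaging of the same dichotomy one may work with the invariants of $A$: a nonzero purely imaginary pair of eigenvalues is characterized by $\tr A=0$ together with $\det A>0$, which isolates the center case at once and leaves the complementary possibilities transparent.
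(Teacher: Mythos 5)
Your argument is correct, but it is not the route the paper takes. You classify $A$ up to real similarity and read off the behaviour of $\e^{tA}$ directly: periodic orbits occur exactly in the centre case $\sigma(A)=\{\pm\ii\beta\}$, $\beta\neq 0$, and in every other case some mode is strictly monotone, so no nonconstant orbit closes. The paper instead proves the ``no periodic orbits'' direction by exhibiting a \pn{Dulac} function: it imposes $\Div(BX)=\norm{X}^2$ with the quadratic ansatz $B=\frac{1}{2}\langle z,Gz\rangle$, reduces this to the \pn{Lyapunov} equation $S^TG+GS=A^TA$ with $S=A+\tr{A}$, invokes the solvability theory for the \pn{Silvester} equation under the stated spectral hypothesis, and writes out the coefficients of $B$ explicitly (the denominators contain $\tr{A}=a+d$, which is why the trace-zero case needs separate care and why at most one zero eigenvalue is assumed). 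The comparison is instructive: your proof is elementary, self-contained, and actually cleaner as a proof of the two-sided equivalence as literally stated, since the centre case immediately supplies the converse. But it produces no \pn{Dulac} function, and the explicit quadratic $B$ is precisely what the paper needs downstream --- Proposition \ref{theo:DFnearHyperbolicFP} obtains a local \pn{Dulac} function near a hyperbolic equilibrium by combining this construction with the \pn{Hartman Grobman} theorem, and the discussion of open questions turns on where that quadratic $B$ degenerates. So your proof establishes the theorem but not the constructive content the paper extracts from it; if you want your argument to serve the same purpose in the paper's programme, you would still need to carry out the \pn{Lyapunov}-equation step.
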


\begin{proof}
Let the vector field be $X(z)=Az$ with 
\begin{equation*}
\begin{pmatrix}
a & b\\
c & d	
\end{pmatrix}
\in \R^{2\times 2}
\end{equation*}
Instead of the general partial differential inequality \eqref{eq:DulacCondition} the author considered
\begin{equation}
\Div(B X)=\norm{X}^2:=P^2+Q^2
\label{eq:DulacNormSquare}
\end{equation}
and made a quadratic ansatz for the \pn{Dulac} function
\begin{equation*}
B=\frac{1}{2}\langle{z, Gz}\rangle
\end{equation*}
with some matrix $G \in \R^{2 \times 2}$.
Then equation \eqref{eq:DulacNormSquare} reduces to 
\begin{equation*}
A^T G+ GA+\tr{A}\cdot G=A^T A.
\end{equation*}
Letting $S:=A+\tr{A}$,
one obtains
\begin{equation}
S^TG+GS=A^T A.
\label{eq:Lyapunov}
\end{equation}
Equation \eqref{eq:Lyapunov} is the well-known \pn{Lyapunov} equation,
a special case of the more general \pn{Silvester} equation.
Now one can apply the well-established solvability theory for the \pn{Silvester} equation \cite{BhatiaRosenthal:1997:HAW}
and verify that the \pn{Lyapunov} equation \eqref{eq:Lyapunov} indeed 
has a unique solution under the spectral assumptions that were made.
On the other hand,
one obtains by direct calculation
\begin{equation*}
B=
b_{20}x^2+b_{02}y^2+b_{11}xy+b_{10}x+b_{01}y+b_{00}
\label{eq:squareDF}
\end{equation*}
with
\begin{equation*}
b_{20}=\frac{a^4+4a^3 d -a^2(2bc-c^2-3d^2)+3acd (c-b)+c^2(b^2-bc+d^2)}{(a+d)(3a^2+10ad-4bc+3d^2)},
\end{equation*}
\begin{equation*}
b_{02}=\frac{a^2(b^2+3d^2)+ad (3b^2-3bc+4d^2)-b^3 c+b^2(c^2+d^2)-2bcd^2+d^4}{(a+d)(3a^2+10ad -4bc+3d^2)},
\end{equation*}
\begin{equation*}
b_{11}=\frac{2a^3 b+a^2 d (7b+3c)-a(3b^2 c+b(c^-3d^2)-7cd^2)-cd(b^2+3bc-2d^2)}{(a+d)(3a^2+10ad -4bc+3d^2)},
\end{equation*}
and $b_{01}=b_{10}=b_{00}=0$.
The case $\tr{A}=a+d=0$ has to be examined with care.
The reader may verify
that having spectrum $\sigma{(A)}=\{0, \frac{1}{2}\tr{A}\}$
is equivalent to
$\det{A}=\left(\frac{\tr{A}}{2}\right)^2$.
Hence the quadratic \pn{Dulac} function from \eqref{eq:squareDF} does the job
because we assumed that at most one eigenvalue of $A$ is zero. 
\end{proof}

The \pn{Hartman Grobman} theorem combined with the last two results
gives some local existence statement holding in some neighborhood of hyperbolic fixed points.
\begin{prop}\label{theo:DFnearHyperbolicFP}
Let $X$ be a smooth vector field and $z$ a hyperbolic zero
of $X$,
\ie, the real part $\re{z}\neq 0$ is different from zero.
Then there is a neighborhood $U$ of $z$ such that
$X$ has a \pn{Dulac} function on $U$. 
\end{prop}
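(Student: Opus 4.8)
The plan is to use the linearization at $z$ as a model, transplant the quadratic \pn{Dulac} function supplied by Theorem~\ref{theo:linearCase}, and then show that it survives the nonlinear perturbation in a sufficiently small neighborhood. After a translation we may assume $z=0$, and we write $A:=DX(0)$ for the Jacobian; hyperbolicity of $z$ is to be read as ``no eigenvalue of $A$ lies on the imaginary axis'', so that $\sigma(A)\cap\ii\R=\emptyset\subseteq\{0\}$ and $A$ is invertible. Conceptually the \pn{Hartman Grobman} theorem then tells us that the local phase portrait of \eqref{eq:planarODE} near $0$ is that of a saddle, node or focus and hence carries no periodic orbit, so the hypothesis of Question~\ref{qst:Dulac} is satisfied; the work of the proof is to exhibit the multiplier. (An alternative route, combining instead with Theorem~\ref{theo:straighten}, would treat $U\setminus\{0\}$ as a canonical region, but since that set is an annulus rather than simply connected the straightening construction only produces a \pn{Dulac} function on each simply connected subregion and would have to be patched; the perturbation argument below is cleaner.)

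First I would apply Theorem~\ref{theo:linearCase} to the linear field $w\mapsto Aw$. Since $\sigma(A)\cap\ii\R=\emptyset$, it furnishes a quadratic form $B$ that is a \pn{Dulac} function for the linearization, and normalising as in its proof we may take $\Div(B\cdot Aw)=\norm{Aw}^2$. Because $A$ is invertible this right-hand side is coercive: $\norm{Aw}^2=\langle w,A^{T}A\,w\rangle\ge c\,\abs{w}^2$ for some $c>0$ and all $w$, the constant $c$ being the smallest eigenvalue of the positive definite matrix $A^{T}A$.

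Next I would split $X$ into its linear part and a remainder, $X(w)=Aw+R(w)$, where smoothness of $X$ gives $R(0)=0$, $DR(0)=0$, hence $R(w)=O(\abs{w}^2)$ and $\Div R(w)=\tr DR(w)=O(\abs{w})$ as $w\to0$; since $B$ is quadratic, $B(w)=O(\abs{w}^2)$ and $\nabla B(w)=O(\abs{w})$. Using linearity of the divergence and the product rule,
\begin{equation*}
\Div(BX)=\Div(B\cdot Aw)+\Div(BR)=\norm{Aw}^2+\big(B\,\Div R+\langle\nabla B,R\rangle\big),
\end{equation*}
and both remainder terms are products whose orders multiply to $O(\abs{w}^3)$, so $\Div(BR)=O(\abs{w}^3)$. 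Combined with the coercive lower bound this yields $\Div(BX)(w)\ge c\,\abs{w}^2-C\abs{w}^3$, whence there is a $\delta>0$ with $\Div(BX)>0$ on the punctured ball $0<\abs{w}<\delta$. The inequality can fail only at the single point $w=0$, a set of \pn{Lebesgue} measure zero, so $B$ is a \pn{Dulac} function on $U:=\{\abs{w}<\delta\}$ in the sense of Definition~\ref{def:DulacFunction}, which is the claim.

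The step I expect to be the real obstacle is the degenerate spectral sub-case hidden inside Theorem~\ref{theo:linearCase}, namely a trace-free saddle with $\sigma(A)=\{\lambda,-\lambda\}$, $\lambda\neq0$: there the normalisation $\Div(B\cdot Aw)=\norm{Aw}^2$ forces a \pn{Lyapunov} equation \eqref{eq:Lyapunov} that is not uniquely solvable, and the explicit coefficients in the proof of Theorem~\ref{theo:linearCase} acquire the vanishing denominator $(a+d)=\tr A$. In that situation I would simply choose a different positive multiplier — for the model saddle $(\lambda x,-\lambda y)$ the quadratic $B=x^2-y^2$ gives $\Div(BX)=2\lambda(x^2+y^2)$, again coercive — and rerun the remainder estimate with this $B$. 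Away from this case, when $\tr A=\re\lambda_1+\re\lambda_2\neq0$, one can bypass Theorem~\ref{theo:linearCase} altogether: $\Div X(0)=\tr A\neq0$, so by continuity $\Div X$ keeps a fixed sign near $0$ and the constant multiplier $B=\sgn(\tr A)\cdot\one$ already works, recovering the \pn{Bendixson} criterion. The only genuine labour is therefore the uniform $O(\abs{w}^3)$ bound on the remainder, which is routine once $A$ is known to be invertible.
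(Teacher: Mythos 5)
Your argument is correct, but it is not the paper's argument. The paper carries out no perturbation estimate at all: its stated strategy (in the sentence introducing the proposition and again in the section on open questions) is to take the quadratic \pn{Dulac} function produced by Theorem~\ref{theo:linearCase} for the linearization and then ``carry it over'' to the nonlinear field via the \pn{Hartman Grobman} theorem, i.e.\ by transporting the multiplier through the local topological conjugacy between $X$ and $DX(z)$. You instead keep the quadratic form $B$ in the original coordinates and verify the inequality directly: $\Div(B\cdot Aw)=\norm{Aw}^2\ge c\abs{w}^2$ by invertibility of $A$, while the nonlinear correction contributes only $B\,\Div R+\langle\nabla B,R\rangle=o(\abs{w}^2)$, so $\Div(BX)>0$ on a punctured ball, and the single zero at the origin is admissible under Definition~\ref{def:DulacFunction}. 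Your route buys rigour: the \pn{Hartman Grobman} conjugacy is in general only a homeomorphism, so pulling a $W^{1,p}$ multiplier and the identity \eqref{eq:defDulacFunction} back through it is delicate and is nowhere justified in the paper, whereas your estimate needs nothing beyond Taylor's theorem (indeed $C^{1}$ smoothness of $X$ already suffices, since the error terms are then still $o(\abs{w}^2)$ against the coercive $c\abs{w}^2$, so the claimed $O(\abs{w}^3)$ bound is a convenience, not a necessity). You also repair a gap the paper only flags: for a trace-free hyperbolic matrix ($\tr{A}=0$, necessarily a real saddle) the \pn{Lyapunov} equation \eqref{eq:Lyapunov} is not delivered by the formulas in Theorem~\ref{theo:linearCase} because of the factor $(a+d)$ in the denominators, and your substitute $B=x^2-y^2$ in eigencoordinates, with $\Div(BX)=2\lambda(x^2+y^2)$, together with the \pn{Bendixson}-type constant multiplier $B=\sgn(\tr{A})\cdot\one$ when $\tr{A}\neq0$, covers every hyperbolic case. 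What the paper's approach buys in exchange is conceptual economy: it reuses the already-computed linear \pn{Dulac} function verbatim and feeds the gluing-of-charts picture behind Algorithm~\ref{alg:Dulac}, but as written it leaves the regularity of the transported multiplier unaddressed.
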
  
\begin{remark}
Proposition \ref{theo:DFnearHyperbolicFP} says morally that near to hyperbolic equilibriums one can define \pn{Dulac} functions,
which is what we expected since in dimension $n=2$ a hyperbolic equilibrium is either
a node (two real eigenvalues of the same sign),
a saddle (two real eigenvalues of different sign) or 
a focus, sometimes also called spiral point, (two complex conjugate eigenvalues with non-zero real part).
Can there be a \pn{Dulac} function near to a non-hyperbolic equilibrium?
We believe so, unless the equilibrium is a center.
Recall that a non-hyperbolic equilibrium (two purely imaginary eigenvalues of opposite sign)
can be either a center or a focus. 
\end{remark}
\section{Qualitative theory}
Let us recall now some basic definitions and results that are frequently used in the qualitative theory of planar differential equations.
For a more detailed introduction we refer to \cite{MR2737365} and \cite{MR2256001}. 
We call the system \eqref{eq:planarODE} integrable in the domain $\D$ if it has a first integral defined on this domain,
\ie
a non-constant smooth scalar-valued function
$H$ of class $\cont^k$ which is constant on each solution $(x(t),y(t))$ of \eqref{eq:planarODE} as long as it is defined.
This means:
if $(x(t),y(t))$ is any fixed solution of \eqref{eq:planarODE} defined for $t\in[0,t_{\text{max}}]=:I_{\text{max}}$, its maximal interval of existence,
and $H\in \cont^1(\D,\R)$ a first integral of system \eqref{eq:planarODE}, 
then there is a real number $h$ such that
\begin{equation}
H(x(t),y(t))=h \quad \text{ for all } t\in I_{\text{max}}
\label{eq:firstIntegral}
\end{equation} 
is satisfied.
Taking the derivative with respect to time $t$ of equation \eqref{eq:firstIntegral},
we see that any first integral $H\in \cont^1(\D,\R)$ of \eqref{eq:planarODE}
satisfies the linear partial differential equation
\begin{equation}
\langle{\nabla{H},
X}\rangle=P \cdot H_x+ Q \cdot H_y=0.
\label{eq:pdeFirstIntegral}
\end{equation}
Conversely,
every non-constant solution of equation \eqref{eq:pdeFirstIntegral} is a first integral $H:\D\to\R$ of \eqref{eq:planarODE}.
If $H_0$ is a non-constant solution of \eqref{eq:pdeFirstIntegral}, 
then every other solution is of the form $F(H_0)$ where $F$ is an arbitrary function having continuous partial derivatives (use the chain rule to verify this).
First integrals are strongly related to the notion of integrating factors.
\begin{definition}[Integrating factor]
An integrating factor $\mu$ of the planar dynamical system \eqref{eq:planarODE} is a smooth solution
$\mu \in \cont^1(\Omega,\R)$ of the linear partial differential equation
\begin{equation}
\Div(\mu \cdot X)=\mu\cdot \Div{X}+
\langle{\nabla{\mu},X}\rangle=0 \text{ in } \Omega.
\label{eq:defIntegratingFactor}
\end{equation}
Note that the first equality in \eqref{eq:defIntegratingFactor} is due to the \pn{Leibniz} rule
and $\Div{X}$, the divergence of the vector field $X$, denotes the trace of the Jacobian of $X$,
$\Div{X}=\frac{\partial{P}}{\partial{x}}+\frac{\partial{Q}}{\partial{y}}$.
\end{definition}
If instead a first integral $H: \D \to \R$ of \eqref{eq:planarODE} is known,
using equation \eqref{eq:pdeFirstIntegral},
one can equivalently define an integrating factor as the common value of the ratios 
\begin{equation}
\mu(x,y)=\frac{H_y}{P}\stackrel{!}{=}-\frac{H_x}{Q},
\label{eq:}
\end{equation}
\ie,
an integrating factor must satistfy both $H_y=\mu P$ and $H_x=-\mu Q$.
The latter two relations can be read as
\begin{equation}
d{H(x,y)}=\mu(x,y) (P(x,y) dy - Q(x,y) dx),
\end{equation}
where $d{H(x,y)}$ indicates the differential of $H$.
Thus, multiplying the right hand side of \eqref{eq:planarODE} by an integrating factor makes
the equation an exact differential and an exact equation),
which means that, 
whenever an integrating factor is available,
the modified vector field
$\mu X$
has vanishing divergence and the problem of solving equation \eqref{eq:planarODE} is
reduced to 
one-dimensional integration
\begin{equation*}
H(x,y)=\int_{(x_0,y_0)}^{(x,y)}{\mu(x,y) (P(x,y)dy - Q(x,y)dx)}.
\end{equation*}
Note that the latter line integral might not be well-defined if the domain $\D$ is not simply connected.
For this reason, 
integrating factors are usually considered only in connected components of $\D$.  
Secondly, we observe that the vector fields $X$ and $\mu X$ have the same phase portrait (with maybe reversed orientation if $\mu$ is negative)
as long as $\mu$ does not vanish.
That is why 
solving system \eqref{eq:planarODE},
\ie constructing a first integral,
and finding an integrating factor for it are considered to be equivalent problems.
In applications the notion of inverse integrating factor is very common.
\begin{definition}[Inverse integrating factor]
The function $V\in\cont^1(\Omega, \R)$ is called an inverse integrating factor for the planar system \eqref{eq:planarODE} in the domain $\Omega \subseteq R^2$  
if 
\begin{equation*}
\mu=\frac{1}{V}
\end{equation*} is an integrating factor of system \eqref{eq:planarODE} in $\Omega \setminus \{V=0\}$.
As usual $\{V=0\}$ is short notation for
the preimage of zero under  $V$, $V^{-1}(0)=\{z\in\Omega:\, V(z)=0 \}$.
\end{definition}
The method of integrating factors is,
both historically and theoretically,
a very important technique in the qualitative analysis of first order ordinary differential equations.
The use of integrating factors goes back to \pn{Leonard Euler} (1707 - 1783).\\
Integration factors and first integrals refer to the problem of integrating the planar system \eqref{eq:planarODE},
which means geometrically nothing but finding smooth curves that are tangential to the vector field at each point.
Then, another interesting problem is deriving the qualitative behavior of these solution curves 
as, for instance, their topology (whether they are closed or not), 
their asymptotics (whether they blow up in finite time or remain in some compact set and approach a limit cycle or an equilibrium point)
and stability properties.

\section{A unifying point of view}
\begin{definition}[Invariant curve]
Let $\Omega \subseteq \R^2 $ be an open set of the plane.
An invariant curve is the vanishing set or the primage of zero of some smooth function.
More precisely, 
to a given function $f\in\cont^1(\Omega,\C)$
we associate 
the preimage of zero
\begin{equation}
f^{-1}(0)\equiv\{f=0\}:=\{z\in\Omega|\, f(z)=0\}
\label{eq:invariantCurve}
\end{equation}
and call it an invariant curve of the vector field $X=(P,Q)\in \cont^1(\D,\R^2)$
if there is a smooth function $k \in \cont^1(\Omega,\C)$, called cofactor of $f$, satisfying the relation 
\begin{equation}
\langle \nabla{f}, X\rangle=k \cdot f\quad \text{ for all } z \in \Omega
\label{eq:defInvariantCurve}
\end{equation}
or more explicitly
\begin{equation*}
f_x(z) \cdot P(z)+f_y(z) \cdot Q(z)=k(z) \cdot f(z).
\end{equation*}
Here $\nabla{f}=(\frac{\partial{f}}{\partial{x}}, \frac{\partial{f}}{\partial{y}})^{T}$ denotes the gradient of $f$,
$\langle{\cdot,\cdot}\rangle$  is the canonical inner product of $\R^2$ and the subscripts of $f$ indicate partial derivates, $f_x=\frac{\partial{f}}{\partial{x}}$, $f_y=\frac{\partial{f}}{\partial{y}}$.
Note that on the invariant curve $\{f=0\}$
the gradient of $f$, $\nabla{f}$, is orthogonal to the vector field $X$ by the defining property of invariant curves \eqref{eq:defInvariantCurve}.
By convention, the function $f$ defining the invariant curve $f^{-1}(0)\subseteq \R^2$ is called invariant function. 
\end{definition}
%
For studying the multiplicity of invariant curves, the notion of exponential factors, 
a special case of invariant functions, is useful.
It allows the construction of first integrals for polynomial systems
via the same method used by \pn{Darboux}.
\begin{definition}[Exponential factor]
Given two $h,g \in \R[x,y]$ coprime polynomials, 
the function $\exp(\nicefrac{g}{h})$ is called an exponential factor for system \eqref{eq:planarODE} 
if there is a polynomial $k\in\R[x,y]$ of degree at most $d-1$, 
$d:=\max\{\deg{P}, \deg{Q}\}$ being the degree of the polynomial system \eqref{eq:planarODE},
satisfying the relation
\begin{equation}
\langle{\nabla{(\e^{\nicefrac{g}{h}})}, X}\rangle=k\cdot \e^{\nicefrac{g}{h}} 
\end{equation}
\end{definition}
Note that obviously
$\{(\e^{\nicefrac{g}{h}})=0\}=\emptyset$,
but $\{g=0\}$ defines an invariant curve for system \eqref{eq:planarODE}.

\begin{definition}[Darboux function]
Any function of the form
\begin{equation}
\prod_{i=1}^{r}{f_i^{\lambda_i}}\prod_{j=1}^{l}{\big(\exp{\{\nicefrac{g_j}{h_j^{n_j}}}\}\big)^{\mu_j}}
\label{eq:darbouxFunction}
\end{equation}
where,
for $1\leq i\leq r$ and
$1\leq j \leq l$,
$f_i(z)=0$ and 
$g_j(z)=0$ 
are invariant curves for system \eqref{eq:planarODE},
$h_j$ is a polynomial of $\C[x,y]$, 
$\lambda_i$ and
$\mu_j$ 
are complex numbers
and 
$n_j$ is a natural number or zero,
is called Darboux function.

\end{definition}

The following remark reminds the reader of some facts about invariant curves.
\begin{remark}
\begin{enumerate}
	\item 
Invariant curves are very important in the qualitative study of dynamical systems
because they generalize the notion integrating factors and \pn{Dulac} functions.
Thus,
it is possible to interpret
integrating factors and \pn{Dulac} functions 
as invariant functions to certain cofactors: 
an integrating factor is nothing but an invariant function having cofactor 
\begin{equation}
k=-\Div{X}
\label{eq:integratingFactorCofactor}
\end{equation}
and
a \pn{Dulac} function is an invariant function with cofactor
\begin{equation}
k=-\Div{X}+ \frac{1}{f}\cdot g,
\label{eq:dulacFunctionCofactor}
\end{equation}
$g$ being a continuous function 
with $g>0$ for almost every  $z\in\Omega$.
Note that the latter interpretation of \pn{Dulac} function as an invariant function to a specific cofactor 
makes only sense when $f$ does not vanish,
\ie,
the \pn{Dulac} function is not defined \resp singular on the invariant curve, the vanishing set of the invariant function.
This already gives a clue on the natural boundaries on the maximal domain of definition of \pn{Dulac} functions.
\item
An easy observation is that,
if $f$ and $g$ are invariant functions with cofactor $k_f$ and $k_g$, 
\resp,
then their pointwise product $f \cdot g$ defines also an invariant curve
with cofactor $k_f+k_g$.
\item
\Wlog,
we will always consider complex-valued invariant functions
because,
if $f$ is an invariant function with cofactor $k$ 
(with respect to some vector field in some domain),
then its conjugate function $\bar{f}$ is also an invariant function having cofactor $\bar{k}$,
and  therefore
the product
$f\cdot\bar{f}$ is a real-valued invariant function
with cofactor $k+\bar{k}$.
The same holds for exponential factors.
\item
In the case of polynomial planar vector fields
the algebraic part of invariant curves and exponential factors has already been developed.
We quote some of these results and refer to \cite{MR2166493}, \cite{MR1106193} and \cite{MR1258526} for further reading.
Therefore, 
let the vector field $X\in \R_{d}^2[x,y]$ be a polynomial vector field of degree $d$.
Furthermore, 
assume that 
$\frac{d(d+1)}{2}+1$
different irreducible invariant algebraic curves
are known.
Then one can construct
a first integral 
of the form
\begin{equation}
H=f_{1}^{\lambda_1}\cdot \ldots \cdot f_{s}^{\lambda_s} 
\label{eq:darbouxFirstIntegral}
\end{equation}
where each 
$f_i(x, y)$ defines an invariant
algebraic curve
\begin{equation}
f_i(x, y)=0
\label{eq:invariantAlgebraicCurve}
\end{equation}
for system \eqref{eq:planarODE}
and $\lambda_i\in\C$ not all of them null, for $i = 1, 2,\ldots, s$,
$s\in\N$.
The functions of type \eqref{eq:darbouxFunction}
are called
Darboux functions.
\item
The irreducibility of the invariant functions in the algebraic case
must be replaced by the condition
\begin{equation}
\{p\in\Omega:\, f(p)=0\text{ and } \nabla{f}(p)=0\} \subseteq \{p\in\Omega:\, X(p)=0\}
\end{equation}
in the non-algebraic case.
\item 
An easy observation:
Any invariant curve $\{f=0\}$ has exactly one cofactor
\begin{equation*}
k=\frac{\langle{\nabla{f},X}\rangle}{f}.
\end{equation*}
\end{enumerate}
\end{remark}

\section{Open questions}
In section \ref{sec:localResults}, proposition \ref{theo:DFnearHyperbolicFP}, we obtained a local existence result for \pn{Dulac} functions
near to a hyperbolic equilibrium.
The proof was based on theorem \ref{theo:linearCase},
where we calculated a \pn{Dulac} function explicitly in terms of a quadratic polynomial.
In this proof we observed, in fact, 
why one needs to impose that at most one eigenvalue of the matrix $A$ is zero,
because, if we had two such eigenvalues $\sigma(A)=\{0\}$, the matrix would have trace zero
and we would have divided by it. 
Then, by applying the \pn{Hartman Grobman} theorem, the result carried over to hyperbolic fixed points.
Observe that 
neither does the \pn{Hartman Grobman} theorem hold for equilibria $p$ with linearization
having purely imaginary or zero spectrum, 
nor could we make any use of it,
because in both cases $\Div{X}(p)=0$ holds and our \pn{Dulac} function blows up.
Hence, in future the
\begin{qst}
When does a \pn{Dulac} function exist near to non-hyperbolic fixed points?
\end{qst}
for nonlinear vector fields has to be addressed. 
This question is somehow naive because one has do deal here with the center-focus-problem,
that is still not completely solved in general.
But this \pn{Dulac} approach may give a new perspective on it.
Why do we bother about all these local existence statements for \pn{Dulac} functions?
In principle, the motivation arose from the following
\begin{alg}\label{alg:Dulac}
Input: a nonlinear planar vector field $X$;
Output: number and position of periodic orbits together with phase portrait.
\begin{enumerate}
	\item 
	\textbf{Step1:} Determine the zeros of $X$.
	\item
	\textbf{Step2:} At each zero, define locally a \pn{Dulac} function.
	\item
	\textbf{Step3:} Extend them as long as possible. If no further extension is possible,
	one has found the limit cycles and determined the phase portrait. 
\end{enumerate}
\end{alg}
Of course, this is rather a pseudo algorithm because one cannot accomplish any of its steps.
One more approachable but somehow technical step is to combine these local results to a global one.
Technically, this means, as one has to deal here with different local coordinate representations,
that one has to glue together different diffeomorphisms.
Summing up, 
we claim that the \pn{Dulac} method in the spirit of algoritm \ref{alg:Dulac}
will give new insights in the qualitative theory of planar differential equations.



\begin{figure}

\begin{flushleft}
\begin{minipage}[b]{0.9\textwidth}

\includegraphics[width=1.4\linewidth,height=1.4\textheight,
keepaspectratio]{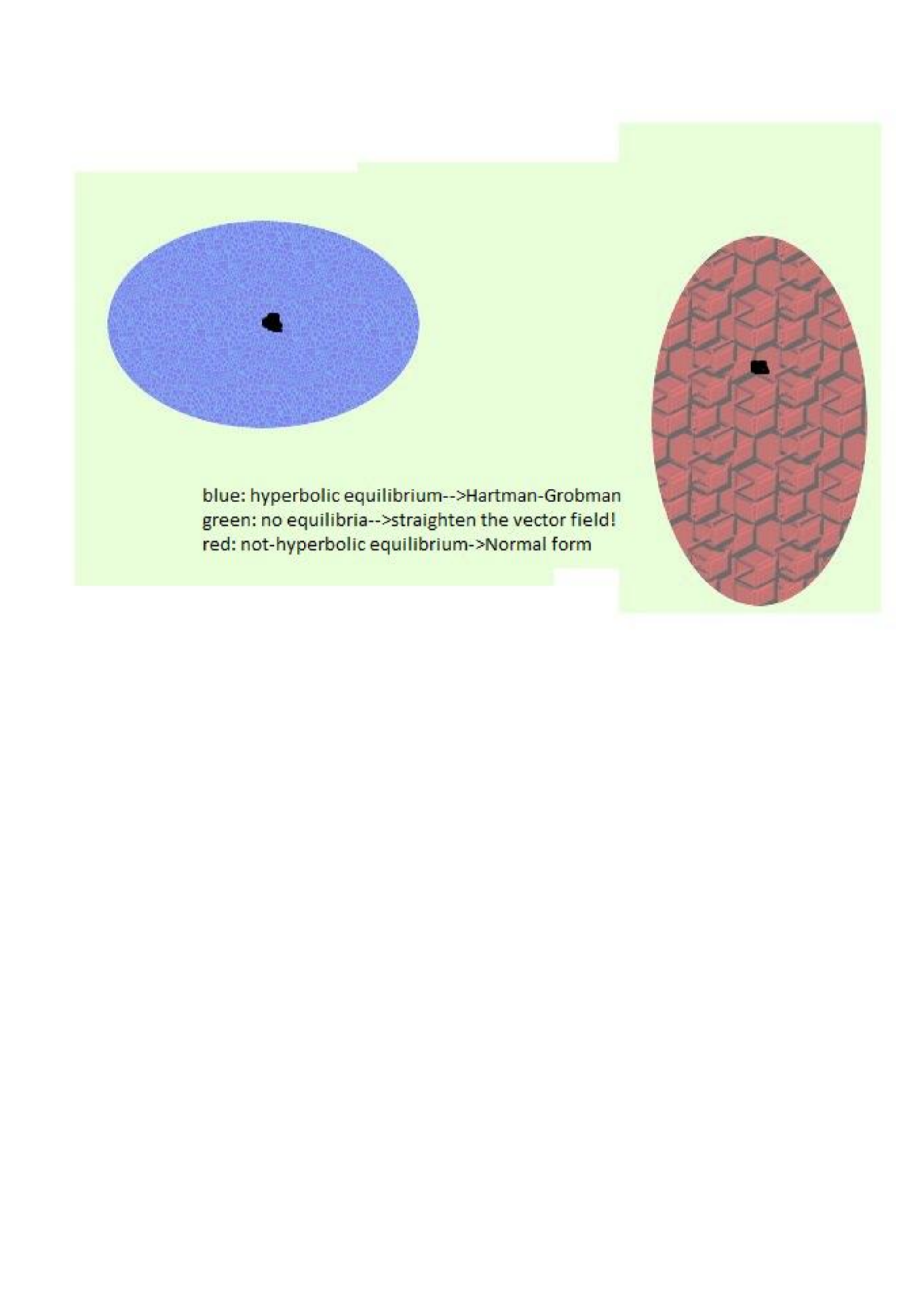}
  \caption{Gluing three different diffeomorphisms}
  \label{pic:gluingDiffeo}
  \end{minipage}
\end{flushleft}
\end{figure}
\newpage

\bibliography{myrefs}{}
\bibliographystyle{plain}

\end{document}